\documentclass [11pt,twoside,a4paper]{article}
\usepackage{amsfonts}
\usepackage{amsthm}
\usepackage{amsmath}
\usepackage{amstext}
\usepackage{amssymb}
\usepackage{mathrsfs}
\usepackage{amscd}
\usepackage{xypic}
\usepackage{epsf}              
\usepackage{graphicx}          
\usepackage{fancybox}          
\usepackage{color}             
\usepackage{fancyhdr}
\usepackage[hang,footnotesize]{caption2}  

\def\mathcal{\mathscr}
\newfont{\aaa}{cmb10 at 19pt}
\newfont{\bbb}{cmb10 at 11pt}
\newtheorem{lem}{Lemma}[]
\newtheorem{thm}{Theorem}
\newtheorem{cor}{Corollary}[]

\newtheorem{definition}{Definition}[]

\pagestyle{myheadings}

\def\Ext{\text{Ext}}

\def\End{\text{End}}
\def\ker{\text{Ker}}
\def\cok{\text{Coker}}

\def\dim{\text{dim}}
\def\rad{\text{rad}}

\def\modl{\text{mod}}

\def\ind{\text{ind}}

\def\soc{\text{Soc}}
\def\mfk{\mathfrak}

\def\ZZ{\mathbb Z}

\newcommand{\beq}{\begin{equation}}
\newcommand{\eeq}{\end{equation}}
\newcommand{\bey}{\begin{eqnarray}}
\newcommand{\eey}{\end{eqnarray}}
\newcommand{\beyy}{\begin{eqnarray*}}
\newcommand{\eeyy}{\end{eqnarray*}}


\setlength{\textwidth}{132mm}
\setlength{\textheight}{20.5cm}
\setlength{\headheight}{1cm} 
\setlength{\oddsidemargin}{13.5mm}
\setlength{\evensidemargin}{13.5mm}
 \setlength{\parskip}{0mm}

 \makeatletter
\def\@evenhead{
   \vbox{\hbox to \textwidth
{}{\hspace{0mm}{\footnotesize \thepage}}{\hspace{8cm}
   {\footnotesize {Ming Ding and Jie Sheng}}}
  \protect\vspace{1truemm}\relax
   \hrule depth0pt height0.15truemm width\textwidth
   }}
   \def\@evenfoot{}
\def\@oddhead{
    \vbox{\hbox to \textwidth
  {{\hspace{0cm}{\footnotesize Multiplicative properties of  a quantum
Caldero-Chapoton map}
  \hfill{\footnotesize \thepage}}\hspace{0mm}}{}
   \protect\vspace{1truemm}\relax
   \hrule depth0pt height0.15truemm width\textwidth
  }}
  \def\@oddfoot{}
\makeatother


\begin{document}



\setcounter{page}{1}
\qquad\\[8mm]

\noindent{\aaa{Multiplicative properties of a  quantum
Caldero-Chapoton map associated to valued quivers}}\\[1mm]

\noindent{\bbb Ming Ding$^{1}$ and Jie Sheng$^{2}$}\\[-1mm]

 \noindent\footnotesize{1. School of Mathematical Sciences,
Nankai University, Tianjin 300071, China\\
E-mail:m-ding04@mails.tsinghua.edu.cn}\\[4mm]
\noindent\footnotesize{2. Department of Applied Mathematics, China
Agricultural University, Beijing 100083, China\\
E-mail:
shengjie@amss.ac.cn}\\[6mm]

\normalsize\noindent{\bbb Abstract}\quad We prove a multiplication
theorem of  a quantum Caldero-Chapoton map associated to valued
quivers which extends the results in \cite{DX}\cite{D}. As an
application, when $Q$ is a valued quiver of finite type or rank $2$,
we obtain that the algebra $\mathcal{AH}_{|k|}(Q)$ generated by all
cluster characters (see Definition \ref{def}) is exactly the quantum
cluster algebra $\mathcal{EH}_{|k|}(Q)$ and various bases of the
quantum cluster algebras of rank $2$ can  naturally be deduced.
\vspace{0.3cm}

\footnotetext{Corresponding author: Jie Sheng, E-mail:
shengjie@amss.ac.cn}

\noindent{\bbb Keywords}\quad cluster variable, quantum cluster algebra\\
{\bbb MSC}\quad 16G20\\[0.4cm]

\noindent{\bbb{1\quad Introduction}}\\[0.1cm]
Ever since the emergency of cluster algebras, the close relation
between it and quiver representations has always been emphasized.
One interesting viewpoint is to consider cluster algebras as some
kind of Hall algebras of quiver representations, that was
particularly enhanced by \cite{caldchap} in which the so-called
Caldero-Chapoton formula (or map, or character) was invented. Then
the multiplication formulas (\cite{CK2005},\cite{DX0},\cite{Hubery})
of Caldero-Chapoton characters become important, especially in the
construction of integral bases of cluster algebras (e.g. see
\cite{DX,DXX}).

In \cite{rupel}, D.~Rupel obtained a quantum analogue of the
Caldero-Chapoton formula, which is crucial for the study of quantum
cluster algebras. Unlike in the cluster algebras, it does not
generally hold that $X_{N}X_{M}=|k|^{\pm\frac{1}{2}d_{N\oplus
M}}X_{N\oplus M}$ for any $d_{N\oplus M}\in \ZZ$.  A natural
question to ask is whether the quantized Caldero-Chapoton formula
could be extended to the cluster category. In \cite{D}, this aim was
achieved for equally-valued quivers and a multiplication formula was
verified, which implies that for finite type the algebra
$\mathcal{AH}_{|k|}(Q)$ generated by all cluster characters (see
Definition \ref{def}) is exactly the quantum cluster algebra
$\mathcal{EH}_{|k|}(Q)$.

In this paper, we will extend the results in \cite{D} to valued
quivers and prove two multiplication formulas therein. As an
application, when $Q$ is a valued quiver of finite type or rank $2$,
we prove that the algebra $\mathcal{AH}_{|k|}(Q)$ is exactly the
quantum cluster algebra $\mathcal{EH}_{|k|}(Q)$. In particular, we
obtain various bases of the quantum cluster algebras of rank $2$ by
using the standard monomials in \cite{berzel}.
\noindent \\[4mm]

\noindent{\bbb 2\quad Preliminaries and statement of the main result}\\[0.1cm]
\noindent{\bbb 2.1\quad Definition of quantum cluster algebras}
 Let $L$ be a lattice of rank $m$ and $\Lambda:L\times L\to
\ZZ$ a skew-symmetric bilinear form. Note that $\Lambda$ can  be
identified with an $m\times m$ skew-symmetric matrix which still
denoted by $\Lambda$ if there is no confusion. Set a formal variable
$q$ and
 the ring of integer Laurent polynomials $\ZZ[q^{\pm1/2}]$.
Define the \textit{based quantum torus} associated to the pair
$(L,\Lambda)$ to be the $\ZZ[q^{\pm1/2}]$-algebra $\mathcal{T}$ with
a distinguished $\ZZ[q^{\pm1/2}]$-basis $\{X^e: e\in L\}$ and the
multiplication
\[X^eX^f=q^{\Lambda(e,f)/2}X^{e+f}.\]
  It is known that $\mathcal{T}$ is  contained in its
skew-field of fractions $\mathcal{F}$. A \textit{toric frame} in
$\mathcal{F}$ is a map $M: \ZZ^m\to \mathcal{F} \setminus \{0\}$
given by
\[M({\bf c})=\varphi(X^{\eta({\bf c})})\] where $\varphi$ is an
automorphism of $\mathcal{F}$ and $\eta: \ZZ^m\to L$ is an
isomorphism of lattices. By the definition, the elements $M({\bf
c})$ form a $\ZZ[q^{\pm1/2}]$-basis of the based quantum torus
$\mathcal{T}_M:=\varphi(\mathcal{T})$ and satisfy the following
relations:
\[M({\bf c})M({\bf d})=q^{\Lambda_M({\bf c},{\bf d})/2}M({\bf c}+{\bf d}),\
M({\bf c})M({\bf d})=q^{\Lambda_M({\bf c},{\bf d})}M({\bf d})M({\bf
c}),\]
\[ M({\bf 0})=1,\ M({\bf c})^{-1}=M(-{\bf c}),\]
where $\Lambda_M$ is the skew-symmetric bilinear form on $\ZZ^m$
obtained from the lattice isomorphism $\eta$.  Let $\Lambda_M$ be
 the skew-symmetric $m\times m$ matrix defined by
$\lambda_{ij}=\Lambda_M(e_i,e_j)$ where $\{e_1, \ldots, e_m\}$ is
the standard basis of $\ZZ^m$.  Given a toric frame $M$, let
$X_i=M(e_i)$.  Then we have
$$\mathcal{T}_M=\ZZ[q^{\pm1/2}]\langle X_1^{\pm 1}, \ldots,
X_m^{\pm1}:X_iX_j=q^{\lambda_{ij}}X_jX_i\rangle.$$  An easy
computation shows that:
\[M({\bf c})=q^{\frac{1}{2}\sum_{i<j}
c_ic_j\lambda_{ji}}X_1^{c_1}X_2^{c_2}\cdots X_m^{c_m}=:X^{({\bf c})}
\ \ \ ({\bf c}\in\ZZ^m).\]

Let $\Lambda$ be an $m\times m$ skew-symmetric matrix and
$\tilde{B}$  an $m\times n$ matrix with $n\le m$.  We call the pair
$(\Lambda, \tilde{B})$ \textit{compatible} if up to permuting rows
and columns $\tilde{B}^T\Lambda=(D|0)$  with
$D=diag(d_1,\cdots,d_n)$ where $d_i\in \mathbb{N}$ for $1\leq i\leq
n$. The pair $(M,\tilde{B})$ is called a \textit{quantum seed} if
the pair $(\Lambda_M, \tilde{B})$ is compatible.  Define the
$m\times m$ matrix $E=(e_{ij})$ as follows
\[e_{ij}=\begin{cases}
\delta_{ij} & \text{if $j\ne k$;}\\
-1 & \text{if $i=j=k$;}\\
max(0,-b_{ik}) & \text{if $i\ne j = k$.}
\end{cases}
\]
For $n,k\in\ZZ$, $k\ge0$, denote ${n\brack
k}_q=\frac{(q^n-q^{-n})\cdots(q^{n-k+1}-q^{-n+k-1})}{(q^k-q^{-k})\cdots(q-q^{-1})}$.
Let $k\in[1,n]$ where $[1,n]=\{1,\cdots,n\}$ and ${\bf
c}=(c_1,\ldots,c_m)\in\ZZ^m$ with $c_{k}\geq 0$. Define the toric
frame $M': \ZZ^m\to \mathcal{F} \setminus \{0\}$ as follows
\begin{equation}\label{eq:cl_exp}M'({\bf c})=\sum^{c_k}_{p=0} {c_k \brack p}_{q^{d_k/2}} M(E{\bf c}+p{\bf b}^k),\ \ M'({\bf -c})=M'({\bf c})^{-1}.\end{equation}
where the vector ${\bf b}^k\in\ZZ^m$ is the $k$th column of
$\tilde{B}$.  Following \cite{ca1}, we say a real $m\times n$ matrix
$\tilde{B}'$ is obtained from $\tilde{B}$  by matrix mutation in
direction $k$ if the entries of $\tilde{B}'$ are given by
\[b'_{ij}=\begin{cases}
-b_{ij} & \text{if $i=k$ or $j=k$;}\\
b_{ij}+\frac{|b_{ik}|b_{kj}+b_{ik}|b_{kj}|}{2} & \text{otherwise.}
\end{cases}
\]
Then the quantum seed $(M',\tilde{B}')$ is defined to be the
mutation of $(M,\tilde{B})$ in direction $k$. Two quantum seeds are
called mutation-equivalent if they can be obtained from each other
by a sequence of mutations. Let $\mathcal{C}=\{M'(e_i): i\in[1,n]\}$
where $(M',\tilde{B}')$ is mutation-equivalent to $(M,\tilde{B})$.
The elements of $\mathcal{C}$ are called the \textit{cluster
variables}. Let $\mathbb{P}=\{M(e_i): i\in[n+1,m]\}$ and  the
elements of $\mathbb{P}$ are called \emph{coefficients}. Denote by
$\ZZ\mathbb{P}$ the ring of Laurent polynomials generated by
$q^{\frac{1}{2}},\mathbb{P}$ and their inverses. Then the
\textit{quantum cluster algebra}
$\mathcal{A}_q(\Lambda_M,\tilde{B})$ is defined to be the
$\ZZ\mathbb{P}$-subalgebra of $\mathcal{F}$ generated by
$\mathcal{C}$.\\[0.1cm]

\noindent{\bbb 2.2\quad The quantum Caldero-Chapoton map and  main
result} Let $k$ be a finite field with cardinality $|k|=q$ and
$m\geq n$ be two positive integers. Let $\Delta$ be a valued graph
without vertex loops and with vertex set $\{1,\ldots,m\}$. The edges
of $\Delta$ are of the form $\xymatrix{i\ar@{-}[r]^{(a_{ij},a_{ji})}
& j}$, in which the positive integers $a_{ij}$ form a symmetrizable
matrix.

Let $\widetilde{Q}$ be an orientation of $\Delta$ containing no
oriented cycles: that is, we replace each valued edge by a valued
arrow. Thus $\widetilde{Q}$ is called a valued quiver. Note that any
finite dimensional basic hereditary $k$-algebra can be obtained by
taking the tensor algebra of the $k$-species associated to
$\widetilde{Q}$. In what follows we will denote by
$\widetilde{\mathfrak{S}}$ the
 $k$-species of type $\widetilde{Q}$ in the sense of
\cite{Hubery}, which identified a $k$-species with its corresponding
tensor algebra.

The full subquiver $Q$ on the vertices $1,\ldots,n$ is called the
principal part of $\widetilde{Q}$, with the corresponding
$k$-species denoted by $\mathfrak{S}$. For $1\leq i\leq m$, let
$S_i$ be the $i$-th simple module for $\widetilde{\mathfrak{S}}$.

Let $\widetilde{B}$ be the $m\times n$ matrix associated to the
quiver $\widetilde{Q}$ whose entry in position $(i,j)$ given by
$$
b_{ij}=\dim_{\End_{\widetilde{\mathfrak{S}}}(S_{i})^{op}}\Ext^{1}_{\widetilde{\mathfrak{S}}}(S_{i},S_{j})-
\dim_{\End_{\widetilde{\mathfrak{S}}}(S_{i})}\Ext^{1}_{\widetilde{\mathfrak{S}}}(S_{j},S_{i})
$$
for $1\leq i\leq m$, $1\leq j\leq n$. Denote by $\widetilde{I}$ the
left $m\times n$ submatrix of the identity matrix of size $m\times
m$.

By \cite{rupel}, we can assume that there exists some antisymmetric
$m\times m$ integer matrix $\Lambda$ such that
\begin{align*}
\Lambda(-\widetilde{B})=\begin{bmatrix}D_n\\0
\end{bmatrix},
\end{align*}
where $D_n=diag(d_1,\cdots,d_n)$ where $d_i\in \mathbb{N}$ for
$1\leq i\leq n$. Let $\widetilde{R}=\widetilde{R}_{\widetilde{Q}}$
be the $m\times n$ matrix with its entry in position $(i,j)$ is
\[
\widetilde{r}_{ij}:=\mathrm{dim}_{\End_{\widetilde{\mathfrak{S}}}(S_{i})}\Ext^{1}_{\widetilde{\mathfrak{S}}}(S_j,S_i)
\] for $1\leq i\leq m$, $1\leq j\leq n$ respectively. And define
$\widetilde{R}^{'}:=\widetilde{R}_{\widetilde{Q}^{op}}.$
 Denote the principal parts of
the matrices $\widetilde{B}$ and $\widetilde{R}$ by $B$ and $R$
respectively. Note that
$\widetilde{B}=\widetilde{R}^{'}-\widetilde{R}$ and $B=R^{'}-R$.

Let $\mathcal C_{Q}$ be the cluster category (see \cite{BMRRT}) of
the valued quiver $Q$, i.e., the orbit category of the derived
category $\mathcal{D}^b(\mathfrak{S})$ by the functor
$F=\tau\circ[-1]$. We note that the indecomposable
$\mathfrak{S}$-modules and $P_i[1]$ for $1\leq i \leq n$ exhaust the
indecomposable objects of the cluster category $\mathcal C_Q$:
$$\ind\ \mathcal C_Q=\ind\ \modl\mathfrak{S}\sqcup\{P_i[1]:1\leq i \leq n\}$$ where $P_i$ is
the indecomposable projective $\mathfrak{S}$-module at $i$ for $i=1,
\cdots, n.$ Each object $M$ in $\mathcal C_Q$ can be uniquely
decomposed in the following way:
$$M=M_0\oplus P_M[1]$$
where $M_0$ is a $\mathfrak{S}$-module and $P_M$ is a projective
module. Let $P_M=\bigoplus_{1\leq i \leq n}m_iP_i.$ We extend the
definition of the dimension vector $\mathrm{\underline{dim}}$ on
modules in $\mathrm{mod}\mathfrak{S}$ to objects in $\mathcal C_Q$
by setting
$$\mathrm{\underline{dim}}M=\mathrm{\underline{dim}}M_0-(m_i)_{1\leq i \leq n}.$$
The Euler form on $\mathfrak{S}$-modules $M$ and $N$ is given by
$$\langle M,N\rangle=\mathrm{dim}_{k}\mathrm{Hom}_{\mathfrak{S}}(M,N)-\mathrm{dim}_{k}\mathrm{Ext}^{1}_{\mathfrak{S}}(M,N).$$
Note that the Euler form only depends on the dimension vectors of
$M$ and $N$ and the matrix representing this form is
$(I_{n}-R^{tr})D_n=D_n(I_{n}-R^{'})$.

The quantum Caldero-Chapoton map of an acyclic quiver $Q$ has been
defined in \cite{rupel}\cite{fanqin}\cite{D}. The quantum
Caldero-Chapoton map was defined in \cite{rupel} for
$\mathfrak{S}$-modules, in \cite{fanqin} for coefficient-free rigid
object in $\mathcal C_{\widetilde{Q}}$. Later it was extended in
\cite{D} to the cluster category for equally-valued quivers. For
valued quivers, we also have
$$X_?: \mathrm{Obj}\ \mathcal C_{\widetilde{Q}}\longrightarrow \mathcal{T}$$
 defined by the following rules:\\
(1)\ If $M$ is a $\mathfrak{S}$-module, then
                    $$
                       X_{M}=\sum_{\underline{e}} |\mathrm{Gr}_{\underline{e}} M|q^{-\frac{1}{2}
\langle
\underline{e},\underline{m}-\underline{e}\rangle}X^{-\widetilde{B}\underline{e}-(\widetilde{I}-\widetilde{R}^{'})\underline{m}};$$
(2)\ If $M$ is a $\mathfrak{S}$-module and $I$ is an injective
$\widetilde{\mathfrak{S}}$-module, then
                    $$
                       X_{M\oplus I[-1]}=\sum_{\underline{e}} |\mathrm{Gr}_{\underline{e}} M|q^{-\frac{1}{2}
\langle
\underline{e},\underline{m}-\underline{e}-\underline{i}\rangle}X^{-\widetilde{B}\underline{e}-(\widetilde{I}-\widetilde{R}^{'})\underline{m}+\underline{\mathrm{dim}}
\soc I},
                    $$
where $\underline{\mathrm{dim}} I= \underline{i},
\underline{\mathrm{dim}} M= \underline{m}$ and
$\mathrm{Gr}_{\underline{e}}M$ denotes the set of all submodules $V$
of $M$ with $\underline{\mathrm{dim}} V= \underline{e}$. We note
that
$$
X_{P[1]}=X_{\tau P}=X^{\underline{\mathrm{dim}} P/\rad
P}=X^{\underline{\mathrm{dim}}\soc I}=X_{I[-1]}=X_{\tau^{-1}I}.
$$
for any projective $\widetilde{\mathfrak{S}}$-module $P$ and
injective $\widetilde{\mathfrak{S}}$-module $I$ with $\soc I=P/\rad
P.$ In the following, we denote by the corresponding underlined
lower case
 letter $\underline{x}$ the dimension vector of a $\mathfrak{S}$-module
$X$ and view $\underline{x}$ as a column vector in $\mathbb{Z}^n.$

Now we need to recall some notations. For any
$\widetilde{\mathfrak{S}}$-modules $M,N$ and $E$, denote by
$\varepsilon_{MN}^{E}$  the cardinality of the set
$\mathrm{Ext}_{\widetilde{\mathfrak{S}}}^{1}(M,N)_{E}$ which is the
subset of $ \mathrm{Ext}_{\widetilde{\mathfrak{S}}}^{1}(M,N)$
consisting of those equivalence classes of short exact sequences
with middle term isomorphic to $E$ (\cite[Section 4]{Hubery}). Let
$F^M_{AB}$ be the number of submodules $U$ of $M$ such that $U$ is
isomorphic to $B$ and $M/U$ is isomorphic to $A$. Then by
definition, we have
$$|\mathrm{Gr}_{\underline{e}}(M)|=\sum_{A, B;
\underline{\mathrm{dim}}B=\underline{e}}F_{AB}^M.
$$
Denote by
$[M,N]^{1}=\mathrm{dim}_{k}\mathrm{Ext}_{\widetilde{\mathfrak{S}}}^{1}(M,N)$
and
$[M,N]=\mathrm{dim}_{k}\mathrm{Hom}_{\widetilde{\mathfrak{S}}}(M,N).$

Let $M,N$ be any $\mathfrak{S}-$modules  and $I$ any injective
$\widetilde{\mathfrak{S}}-$module. Define
$$\mathrm{Hom}_{\widetilde{\mathfrak{S}}}(M,I)_{BI'}:=\{f:M\longrightarrow I|\ker f\cong B, \cok f\cong
I'\}.$$ Note that $I'$ is an injective $\widetilde{\mfk{S}}-$module.

The main result of this article is the following theorem:
\begin{thm}\label{main}
$$(1)\ q^{[M,N]^{1}}X_{M}X_{N}=q^{\frac{1}{2}\Lambda((\widetilde{I}-\widetilde{R}^{'})\underline{m},
(\widetilde{I}-\widetilde{R}^{'})\underline{n})}
\sum_{E}\varepsilon_{MN}^{E}X_E,$$
$$(2)\ q^{[M,I]}X_{M}X_{I[-1]}=q^{\frac{1}{2}\Lambda((\widetilde{I}-\widetilde{R}^{'})\underline{m},
-\mathrm{\underline{dim}}\soc I)}
\sum_{B,I'}|\mathrm{Hom}_{\widetilde{\mfk{S}}}(M,I)_{BI'}|X_{B\oplus
I'[-1]}.$$
\end{thm}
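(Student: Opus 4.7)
The plan is to expand both sides of each identity as explicit $\mathbb{Z}[q^{\pm 1/2}]$-linear combinations of quantum monomials $X^{\underline{v}}$ and match coefficients monomial by monomial. For part (1), I first expand $X_M X_N$ via the definition of the quantum Caldero-Chapoton map, so that each pair of submodule dimensions $(\underline{e},\underline{f})$ contributes a quantum monomial with exponent vector $-\widetilde{B}(\underline{e}+\underline{f}) - (\widetilde{I}-\widetilde{R}')(\underline{m}+\underline{n})$, which already matches the exponent pattern of $X_E$ with $\underline{\dim}E = \underline{m}+\underline{n}$. The quantum torus relation $X^aX^b = q^{\Lambda(a,b)/2}X^{a+b}$ produces a $q$-factor that, after applying the compatibility condition $\Lambda(-\widetilde{B}) = \begin{bmatrix}D_n\\0\end{bmatrix}$, splits cleanly into three pieces: Euler-form terms $-\tfrac{1}{2}\langle \underline{e},\underline{m}-\underline{e}\rangle$ and $-\tfrac{1}{2}\langle \underline{f},\underline{n}-\underline{f}\rangle$ that will reassemble on the right as $-\tfrac{1}{2}\langle \underline{g},\underline{m}+\underline{n}-\underline{g}\rangle$ inside each $X_E$, a residual cross term in $\underline{e},\underline{f},\underline{m},\underline{n}$ that must be identified with $q^{[M,N]^1}$, and the scalar $q^{\frac{1}{2}\Lambda((\widetilde{I}-\widetilde{R}')\underline{m},(\widetilde{I}-\widetilde{R}')\underline{n})}$ appearing as the prefactor on the right.

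The combinatorial content then reduces to a Hall-algebra identity of the form
\[
\sum_E \varepsilon^E_{MN} |\mathrm{Gr}_{\underline{g}} E| = \sum_{\underline{e}+\underline{f}=\underline{g}} |\mathrm{Gr}_{\underline{e}} M| \cdot |\mathrm{Gr}_{\underline{f}} N| \cdot (\text{extension count}),
\]
obtained by decomposing each pair $(U\subseteq E,\ 0\to N\to E\to M\to 0)$ via $B:=U\cap N\subseteq N$ and $A:=U/B\subseteq M$. In the valued-quiver setting this is the Green-type formula recorded by Hubery in \cite{Hubery}; the extension count together with the residual $q$-powers from the first step must combine to yield exactly $q^{[M,N]^1}$ on the left, proving (1).

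For part (2), the argument is structurally parallel but simpler because $X_{I[-1]}=X^{\underline{\dim}\soc I}$ is a single monomial. Expanding $X_M X_{I[-1]}$ therefore produces one quantum monomial per $\underline{e}$, twisted by $q^{\frac{1}{2}\Lambda(-\widetilde{B}\underline{e}-(\widetilde{I}-\widetilde{R}')\underline{m},\ \underline{\dim}\soc I)}$. In place of Green's formula one uses the Hom-version: every $f \in \Hom_{\widetilde{\mathfrak{S}}}(M,I)_{BI'}$ is determined by its kernel $B\subseteq M$ together with an embedding of $M/B$ into $I$, with cokernel $I'$ again injective. Expanding $X_{B\oplus I'[-1]}$ via the second rule of the definition and using the identity $\underline{\dim}\soc I' = \underline{\dim}\soc I - \underline{\dim}(M/B)/\rad$ for the socle shift produced by the quotient, one matches the coefficients on both sides.

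The main technical obstacle is the careful tracking of $q$-powers, particularly in the valued (non-equally valued) setting. In \cite{D} the equally-valued case permitted the symmetrizer $D_n$ to act as a scalar, which simplifies several cancellations; here one must consistently distinguish $\dim_{\End(S_i)}\Ext^1$ from $\dim_k\Ext^1$ (as encoded by $\widetilde{R}$ and $\widetilde{R}'$) and verify that contributions from the Euler form, the $\Lambda$-bilinear twist, and the Hall-algebra multiplicities combine exactly into the single powers $q^{[M,N]^1}$ and $q^{[M,I]}$ on the left-hand sides. This bookkeeping, together with the valued-quiver version of Green's identity, is where the main work of the proof lies.
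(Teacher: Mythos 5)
Your outline follows the same route as the paper's proof: expand through the quantum Caldero--Chapoton map, use the compatibility condition $\Lambda(-\widetilde{B})=\begin{bmatrix}D_n\\0\end{bmatrix}$ to convert the $\Lambda$-twists from the quantum torus into Euler-form data (this is the paper's Lemma \ref{1} and Corollary \ref{2}), and invoke the Green/Hubery formula. But the decisive computations are exactly the ones you defer. Your reduction to a Hall identity graded by dimension vectors, with a single ``extension count'' attached to each pair $\underline{e}+\underline{f}=\underline{g}$, cannot be carried out at that level of granularity: in Green's formula the weights $q^{[M,N]-[A,C]-[B,D]-\langle A,D\rangle}$ depend on the isomorphism classes $A,B,C,D$ through the Hom-dimensions $[A,C]$, $[B,D]$, not only on the dimension vectors $\underline{e},\underline{f}$, and $[M,N]^{1}$ itself is not a function of $\underline{m},\underline{n}$. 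The matching only works after summing the middle terms via $\sum_{X}\varepsilon^{X}_{AC}=q^{[A,C]^{1}}$, $\sum_{Y}\varepsilon^{Y}_{BD}=q^{[B,D]^{1}}$ and using the identity $[M,N]-[A,C]-[B,D]-\langle A,D\rangle+[A,C]^{1}+[B,D]^{1}=[M,N]^{1}+\langle B,C\rangle$, which turns every module-dependent exponent into Euler-form data plus the single global factor $q^{[M,N]^{1}}$. That identity, combined with the Corollary \ref{2} computation of the torus twist, is the actual content of part (1); asserting that the residual powers ``must combine to yield $q^{[M,N]^{1}}$'' is precisely the statement to be proved.

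In part (2) there is moreover a concrete error. The socle-shift identity you propose, $\underline{\mathrm{dim}}\,\soc I'=\underline{\mathrm{dim}}\,\soc I-\underline{\mathrm{dim}}\bigl((M/B)/\rad (M/B)\bigr)$, is false in general: for the quiver $1\to 2$ take $M=S_2$ embedded in $I=I_2$ (dimension vector $(1,1)$, socle $S_2$), so $B=0$ and $I'=S_1=I_1$; then $\underline{\mathrm{dim}}\,\soc I'=(1,0)$, while your formula predicts $(0,0)$. The correct bookkeeping is through Hubery's Lemma~1, $(\widetilde{I}-\widetilde{R}^{'})\underline{i}=\underline{\mathrm{dim}}\,\soc I$ for injectives, applied to both $I$ and $I'$, giving $\underline{\mathrm{dim}}\,\soc I-\underline{\mathrm{dim}}\,\soc I'=(\widetilde{I}-\widetilde{R}^{'})(\underline{i}-\underline{i'})$, which is not the dimension vector of the top of $M/B$. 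Likewise, ``$f$ is determined by its kernel and an embedding of $M/B$ into $I$'' must be converted into the counting identities the proof actually rests on: $|\Hom_{\widetilde{\mathfrak{S}}}(M,I)_{BI'}|=\sum_{A}|\Aut(A)|F^{M}_{AB}F^{I}_{I'A}$, the associativity $\sum_{B}F^{B}_{XY}F^{M}_{AB}=\sum_{G}F^{G}_{AX}F^{M}_{GY}$, and $\sum_{A,I',X}|\Aut(A)|F^{I}_{I'A}F^{G}_{AX}=q^{[G,I]}=q^{\langle G,I\rangle}$, where injectivity of $I$ is what allows $[G,I]$ to be replaced by the Euler form. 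None of these steps appear in your sketch, and without them the coefficient matching in (2) does not go through.
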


\begin{definition}\label{def}
$X_{L}$ is called \emph{the corresponding cluster character}, if $L$
is a $\mfk{S}$-module or $L=M\oplus I[-1]\in\mathcal
C_{\widetilde{Q}}$ satisfying that $M$ is a $\mfk{S}$-module and $I$
is an injective $\widetilde{\mfk{S}}$-module.
\end{definition}
For a valued quiver $Q$, denote by $\mathcal{AH}_{|k|}(Q)$ the
 $\mathbb{ZP}$-subalgebra of $\mathcal{F}$ generated by
all the cluster characters and by $\mathcal{EH}_{|k|}(Q)$ the
corresponding quantum cluster algebra, i.e, the
$\mathbb{ZP}$-subalgebra of $\mathcal{F}$ generated by all the
cluster variables. Then we have the following corollary:
\begin{cor}\label{cor}
For any  valued quiver $Q$ of finite type or rank $2$, we have
$\mathcal{EH}_{|k|}(Q)=\mathcal{AH}_{|k|}(Q).$
\end{cor}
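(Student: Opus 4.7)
The plan is to prove both inclusions $\mathcal{EH}_{|k|}(Q) \subseteq \mathcal{AH}_{|k|}(Q)$ and $\mathcal{AH}_{|k|}(Q) \subseteq \mathcal{EH}_{|k|}(Q)$ separately, using Theorem \ref{main} to drive the second. For $\mathcal{EH}_{|k|}(Q) \subseteq \mathcal{AH}_{|k|}(Q)$ I would show that every cluster variable in a seed mutation-equivalent to the initial quantum seed is a cluster character in the sense of Definition \ref{def}. Under either hypothesis (finite type or rank $2$) the cluster variables are in bijection with the indecomposable rigid objects of $\mathcal{C}_{\widetilde{Q}}$, i.e.\ exceptional $\mathfrak{S}$-modules together with the shifted projectives $P_i[1]$, and under this bijection the quantum Caldero-Chapoton image $X_L$ coincides with the corresponding cluster variable (\cite{rupel}\cite{fanqin}\cite{D}). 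Since each such $L$ is admissible for Definition \ref{def}, the inclusion follows.

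For the reverse inclusion it is enough to put $X_L$ in $\mathcal{EH}_{|k|}(Q)$ for every admissible $L$. In finite type every indecomposable $\mathfrak{S}$-module is exceptional, so $X_M$ is a cluster variable whenever $M$ is indecomposable, and $X_{I[-1]}=X_{P_i[1]}$ is a cluster variable for the projective $P_i$ attached to the socle of $I$. For a decomposable $L=L_1\oplus\cdots\oplus L_r\oplus I[-1]$ I would induct on the complexity
$$c(L)\;=\;\sum_{i<j}[L_i,L_j]^{1}\;+\;\sum_i [L_i,I].$$
If some pair $(L_i,L_j)$ has $[L_i,L_j]^{1}>0$, apply Theorem \ref{main}(1) to that pair and solve for the trivial-extension summand $X_{L_i\oplus L_j}$ in terms of $X_{L_i}X_{L_j}$ and the remaining middle-term summands $X_E$; each non-split middle term $E$ is a proper degeneration of $L_i\oplus L_j$, so replacing $L_i\oplus L_j$ by $E$ strictly decreases $c$ and the induction closes. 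Pairs of the form $(L_i,I)$ with $[L_i,I]>0$ are handled by Theorem \ref{main}(2) in exactly the same way, using the coefficients $|\mathrm{Hom}_{\widetilde{\mathfrak{S}}}(L_i,I)_{BI'}|$. This writes $X_L$ as a $\mathbb{ZP}$-polynomial in cluster variables.

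In rank $2$ the same inductive scheme applies, but the module category is in general representation-infinite, so an external termination mechanism is needed. I would invoke the standard monomials of Berenstein-Zelevinsky \cite{berzel}, which form a $\mathbb{ZP}$-basis of the rank $2$ quantum cluster algebra: iterating Theorem \ref{main}(1) on the two exceptional objects attached to consecutive clusters produces a $\mathbb{ZP}$-combination of products $X_{L_1}^{a}X_{L_2}^{b}$, which are (up to a $q$-monomial supplied by the $\Lambda$-twist) exactly the standard monomials. Matching these expressions against the Berenstein-Zelevinsky basis both yields the inclusion and, as a byproduct, the \emph{various bases} mentioned in the abstract.

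The main obstacle will be the combined termination and $q$-bookkeeping. One must choose a complexity that strictly decreases under every application of Theorem \ref{main} (the degeneration-order argument is classical, cf.\ \cite{DX,DXX}), and verify that the twists $q^{\frac{1}{2}\Lambda((\widetilde{I}-\widetilde{R}')\underline{m},(\widetilde{I}-\widetilde{R}')\underline{n})}$ together with the structure constants $\varepsilon^E_{MN}$ (and the analogous quantities in case (2)) combine into coefficients in $\mathbb{ZP}$ at every inductive step; this is where the compatibility condition between $\Lambda$ and $\widetilde{B}$ enters decisively. In rank $2$ the additional challenge is carrying this through the infinite regular families, for which the Berenstein-Zelevinsky basis \cite{berzel} provides the decisive finitary control.
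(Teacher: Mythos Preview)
Your overall architecture (two inclusions, with the inclusion $\mathcal{AH}_{|k|}(Q)\subseteq\mathcal{EH}_{|k|}(Q)$ proved by induction via Theorem~\ref{main}) matches the paper's. For finite type the arguments are essentially identical; the only difference is the complexity measure. The paper does not use your $c(L)=\sum_{i<j}[L_i,L_j]^1+\sum_i[L_i,I]$ but rather $\dim_k\mathrm{Ext}^1_{\mathcal{C}_{\widetilde{Q}}}(L,L)$, and invokes a separate lemma (for a non-split triangle $M\to E\to N\to M[1]$ one has $\dim_k\mathrm{Ext}^1_{\mathcal{C}}(E,E)<\dim_k\mathrm{Ext}^1_{\mathcal{C}}(M\oplus N,M\oplus N)$) to guarantee strict decrease. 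This is cleaner than your measure, which depends on a chosen indecomposable decomposition and whose strict decrease under ``replacing $L_i\oplus L_j$ by a non-split middle term $E$'' you have asserted but not justified (the indecomposable summands of $E$ may interact with the remaining $L_t$ in new ways).

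The real gap is in rank~$2$. Your inductive scheme only moves among \emph{decomposable} objects; it never tells you why $X_M\in\mathcal{EH}_{|k|}(Q)$ when $M$ is an indecomposable \emph{regular} $\mathfrak{S}$-module. Theorem~\ref{main}(1) expresses $X_MX_N$ as a sum of $X_E$'s, not the other way around, so you cannot ``iterate'' it starting from a single indecomposable regular $M$; and your appeal to the Berenstein--Zelevinsky standard monomials presupposes you already know $X_M$ lies in the cluster algebra before you can compare it to that basis. The paper handles this point by an entirely different mechanism: for regular $M$ it uses the BGP reflection functors together with Rupel's compatibility $\Phi_i(X^Q_M)=X^{Q'}_{\Sigma_iM}$ to show $X_M$ lies in the \emph{upper} quantum cluster algebra, and then invokes the acyclic case of \cite{berzel} (upper $=$ lower) to conclude $X_M\in\mathcal{EH}_{|k|}(Q)$. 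Only after this base case for indecomposables is secured does the paper run the same Ext-dimension induction as in finite type to treat arbitrary $L$. Your proposal is missing this ingredient for the regular indecomposables.
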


\noindent{\bbb 3\quad Proof of the main theorem}\\[0.1cm]
In this section, we fix a valued quiver $Q$  with $n$ vertices.

\begin{lem}\label{1} For any dimension vector
$\underline{m}, \underline{e}, \underline{f}\in \mathbb{Z}^{n}_{\geq
0},$ we have
$$(1)\ \Lambda((\widetilde{I}-\widetilde{R}^{'})\underline{m}, \widetilde{B}\underline{e})=-\langle \underline{e}, \underline{m}\rangle;$$
$$(2)\ \Lambda(\widetilde{B}\underline{e}, \widetilde{B}\underline{f})=\langle \underline{f}, \underline{e}\rangle-\langle \underline{e}, \underline{f}\rangle.$$
\end{lem}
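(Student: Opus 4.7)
The plan is to translate both identities into matrix equalities by writing $\Lambda(u,v)=u^{T}\Lambda v$, and then to exploit two ingredients already recorded in Section 2.2. The first is the compatibility relation
\[\Lambda\widetilde{B}=\begin{bmatrix}-D_{n}\\ 0\end{bmatrix},\]
which in particular tells us that $\Lambda\widetilde{B}$ vanishes below its $n$th row. The second is the identification of the Euler matrix $(I_{n}-R^{T})D_{n}=D_{n}(I_{n}-R')$ recalled just before Section 2.2, which gives $R^{T}D_{n}=D_{n}R'$ and, by applying the same argument to $Q^{op}$, also $R'^{T}D_{n}=D_{n}R$. I would begin by confirming these symmetrizability identities, which reduce to $d_{i}\widetilde{r}'_{ij}=\dim_{k}\Ext^{1}(S_{i},S_{j})=d_{j}\widetilde{r}_{ji}$.

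For part~(1), since only the top $n$ rows of $\Lambda\widetilde{B}$ are nonzero (and equal $-D_{n}$), the matrix product $(\widetilde{I}-\widetilde{R}')^{T}\Lambda\widetilde{B}$ only involves the top $n\times n$ block of $\widetilde{I}-\widetilde{R}'$, which is exactly $I_{n}-R'$. This collapses the left-hand side to $-\underline{m}^{T}(I_{n}-R')^{T}D_{n}\underline{e}$. By definition of the Euler matrix we have $\langle\underline{e},\underline{m}\rangle=\underline{e}^{T}D_{n}(I_{n}-R')\underline{m}$, and transposing this scalar (and using that $D_{n}$ is diagonal) yields precisely the expression above, establishing~(1).

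For part~(2), the same collapse gives $\widetilde{B}^{T}\Lambda\widetilde{B}=-B^{T}D_{n}=-(R'-R)^{T}D_{n}$. Substituting the two symmetrizability identities turns $-(R'^{T}-R^{T})D_{n}$ into $D_{n}(R'-R)=D_{n}B$, so $\Lambda(\widetilde{B}\underline{e},\widetilde{B}\underline{f})=\underline{e}^{T}D_{n}B\underline{f}$. On the other hand, expanding $\langle\underline{f},\underline{e}\rangle-\langle\underline{e},\underline{f}\rangle$ via $D_{n}(I_{n}-R')$ and cancelling the symmetric term $\underline{e}^{T}D_{n}\underline{f}=\underline{f}^{T}D_{n}\underline{e}$ leaves $\underline{e}^{T}D_{n}(R'-R)\underline{f}$, matching the previous expression. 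The only genuine ingredient is the symmetrizability step; the rest is bookkeeping with transposes and block structure, and I do not anticipate a real obstacle.
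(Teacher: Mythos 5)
Your proof is correct and follows essentially the same route as the paper: write both sides as matrix products, use the compatibility relation $\Lambda\widetilde{B}=-\left[\begin{smallmatrix}D_n\\0\end{smallmatrix}\right]$ to collapse to the principal $n\times n$ blocks, and invoke the symmetrizability identity $(I_n-R^{tr})D_n=D_n(I_n-R')$ recorded before Section 2.2. The only cosmetic difference is in part (2), where the paper applies the two equal forms of the Euler matrix directly to $\langle\underline{f},\underline{e}\rangle$ and $\langle\underline{e},\underline{f}\rangle$, while you derive and use the equivalent relations $R^{tr}D_n=D_nR'$ and $(R')^{tr}D_n=D_nR$; this is the same computation.
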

\begin{proof}
By definition, we have \begin{eqnarray}
   && \Lambda((\widetilde{I}-\widetilde{R}^{'})\underline{m}, \widetilde{B}\underline{e})  \nonumber\\
   &=& \underline{m}^{tr}(\widetilde{I}-\widetilde{R}^{'})^{tr}\Lambda \widetilde{B}\underline{e}=
   -\underline{m}^{tr}(\widetilde{I}-\widetilde{R}^{'})^{tr}\begin{bmatrix}D_n\\0 \end{bmatrix}\underline{e}\nonumber\\
   &=& -\underline{m}^{tr}(I_{n}-(R^{'})^{tr})D_n\underline{e}=-\underline{e}^{tr}D_n(I_{n}-R^{'})\underline{m}\nonumber\\
  &=& -\langle \underline{e}, \underline{m}\rangle.\nonumber
\end{eqnarray}
As for (2), the left side of the desired equation is equal to
$$\underline{e}^{tr}\widetilde{B}^{tr}\Lambda
\widetilde{B}\underline{f}=-\underline{e}^{tr}\widetilde{B}^{tr}\begin{bmatrix}D_n\\0
\end{bmatrix}\underline{f}=-\underline{e}^{tr}B^{tr}D_n\underline{f}.$$
The right side is
\begin{eqnarray}
   && \langle \underline{f}, \underline{e}\rangle-\langle \underline{e}, \underline{f}\rangle  \nonumber\\
   &=& \underline{f}^{tr}D_n(I_{n}-R^{'})\underline{e}-\underline{e}^{tr}(I_{n}-R^{tr})D_n\underline{f}\nonumber\\
   &=& \underline{e}^{tr}(I_{n}-R^{'})^{tr}D_n\underline{f}-\underline{e}^{tr}(I_{n}-R^{tr})D_n\underline{f}\nonumber\\
  &=& \underline{e}^{tr}(R^{tr}-(R^{'})^{tr})D_n\underline{f}=-\underline{e}^{tr}B^{tr}D_n\underline{f}.\nonumber
\end{eqnarray}
Thus we prove the lemma.
\end{proof}
\begin{cor}\label{2}
For any dimension vector $\underline{m}, \underline{l},
\underline{e}, \underline{f}\in \mathbb{Z}^{n}_{\geq 0},$ we have
\begin{eqnarray}
   && \Lambda(-\widetilde{B}\underline{e}-(\widetilde{I}-\widetilde{R}^{'})\underline{m},-
   \widetilde{B}\underline{f}-(\widetilde{I}-\widetilde{R}^{'})\underline{l})  \nonumber\\
  &=&\Lambda((\widetilde{I}-\widetilde{R}^{'})\underline{m},(\widetilde{I}-\widetilde{R}^{'})\underline{l})
  +\langle \underline{f}, \underline{e}\rangle-\langle \underline{e},
  \underline{f}\rangle+
  \langle \underline{e}, \underline{l}\rangle-\langle \underline{f}, \underline{m}\rangle.\nonumber
\end{eqnarray}
\end{cor}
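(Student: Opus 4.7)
The plan is to reduce the statement to a direct application of Lemma \ref{1} by exploiting the bilinearity of $\Lambda$. Write $u = (\widetilde{I}-\widetilde{R}^{'})\underline{m}$, $v = (\widetilde{I}-\widetilde{R}^{'})\underline{l}$, and expand
\[
\Lambda(-\widetilde{B}\underline{e}-u,\,-\widetilde{B}\underline{f}-v)
= \Lambda(\widetilde{B}\underline{e},\widetilde{B}\underline{f})
+ \Lambda(\widetilde{B}\underline{e},v)
+ \Lambda(u,\widetilde{B}\underline{f})
+ \Lambda(u,v).
\]
This gives four pieces, three of which are covered by Lemma \ref{1} (with the second one requiring the antisymmetry of $\Lambda$ to flip the arguments so it matches the form of Lemma \ref{1}(1)), and the fourth piece is exactly the term that survives on the right-hand side.

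Concretely, I would apply Lemma \ref{1}(2) to the first summand to obtain $\langle \underline{f},\underline{e}\rangle - \langle \underline{e},\underline{f}\rangle$; apply Lemma \ref{1}(1) with $(\underline{m},\underline{e})$ replaced by $(\underline{m},\underline{f})$ to the third summand to get $-\langle \underline{f},\underline{m}\rangle$; and use antisymmetry $\Lambda(\widetilde{B}\underline{e},v) = -\Lambda(v,\widetilde{B}\underline{e})$ together with Lemma \ref{1}(1) applied to $(\underline{l},\underline{e})$ in place of $(\underline{m},\underline{e})$ to the second summand, yielding $-(-\langle \underline{e},\underline{l}\rangle) = \langle \underline{e},\underline{l}\rangle$. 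Summing these three contributions with the untouched fourth term $\Lambda(u,v) = \Lambda((\widetilde{I}-\widetilde{R}^{'})\underline{m},(\widetilde{I}-\widetilde{R}^{'})\underline{l})$ yields exactly the claimed identity.

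There is no real obstacle: the proof is a purely formal bookkeeping computation once Lemma \ref{1} is in hand. The only point to be careful about is the sign when applying antisymmetry to the cross term $\Lambda(\widetilde{B}\underline{e},(\widetilde{I}-\widetilde{R}^{'})\underline{l})$, since Lemma \ref{1}(1) is stated with $(\widetilde{I}-\widetilde{R}^{'})\underline{m}$ in the first slot and $\widetilde{B}\underline{e}$ in the second; swapping the arguments introduces a minus that cancels the minus from Lemma \ref{1}(1), producing the positive term $\langle \underline{e},\underline{l}\rangle$ in the final expression.
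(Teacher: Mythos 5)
Your proof is correct and is exactly the argument the paper intends: the paper's proof is the one-line remark ``It follows from Lemma \ref{1},'' and your expansion by bilinearity of $\Lambda$, applying Lemma \ref{1}(2) to the $\Lambda(\widetilde{B}\underline{e},\widetilde{B}\underline{f})$ term, Lemma \ref{1}(1) (with antisymmetry for the cross term in $\underline{e},\underline{l}$) to the two mixed terms, and keeping $\Lambda((\widetilde{I}-\widetilde{R}^{'})\underline{m},(\widetilde{I}-\widetilde{R}^{'})\underline{l})$ untouched, is precisely that computation with the signs handled correctly.
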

\begin{proof}
It follows from Lemma \ref{1}.
\end{proof}\noindent\\[4mm]

\textit{Proof of  Theorem \ref{main}(1):} By Green's formula
\cite{Green}, we have
$$\sum_{E}\varepsilon_{MN}^{E}F^{E}_{XY}=\sum_{A,B,C,D}q^{[M,N]-[A,C]-[B,D]-\langle
A,D\rangle}F^{M}_{AB}F^{N}_{CD}\varepsilon_{AC}^{X}\varepsilon_{BD}^{Y}.$$
Then
\begin{eqnarray}
   && \sum_{E}\varepsilon_{MN}^{E}X_E  \nonumber\\
   &=& \sum_{E,X,Y}\varepsilon_{MN}^{E}q^{-\frac{1}{2}\langle Y,X\rangle}F^{E}_{XY}X^{-\widetilde{B}\underline{y}-(\widetilde{I}-\widetilde{R}^{'})\underline{e}}\nonumber\\
  &=&\sum_{A,B,C,D,X,Y}q^{[M,N]-[A,C]-[B,D]-\langle
A,D\rangle-\frac{1}{2}\langle
B+D,A+C\rangle}F^{M}_{AB}F^{N}_{CD}\varepsilon_{AC}^{X}\varepsilon_{BD}^{Y}X^{-\widetilde{B}\underline{y}-(\widetilde{I}-\widetilde{R}^{'})\underline{e}}.\nonumber
\end{eqnarray}
By Corollary \ref{2}, we have
\begin{eqnarray}
   && X^{-\widetilde{B}\underline{y}-(\widetilde{I}-\widetilde{R}^{'})\underline{e}} \nonumber\\
   &=& X^{-\widetilde{B}(\underline{b}+\underline{d})-(\widetilde{I}-\widetilde{R}^{'})(\underline{m}+\underline{n})}\nonumber\\
   &=& q^{\frac{1}{2}\Lambda(-\widetilde{B}\underline{d}-(\widetilde{I}-\widetilde{R}^{'})\underline{n},
-\widetilde{B}\underline{b}-(\widetilde{I}-\widetilde{R}^{'})\underline{m})}
X^{-\widetilde{B}\underline{b}-(\widetilde{I}-\widetilde{R}^{'})\underline{m}}X^{-\widetilde{B}\underline{d}-(\widetilde{I}-\widetilde{R}^{'})\underline{n}}\nonumber\\
   &=& q^{\frac{1}{2}\Lambda((\widetilde{I}-\widetilde{R}^{'})\underline{n},
(\widetilde{I}-\widetilde{R}^{'})\underline{m})+\frac{1}{2}[-\langle
D,B\rangle+\langle B,D\rangle+\langle D,M\rangle-\langle
B,N\rangle]}
X^{-\widetilde{B}\underline{b}-(\widetilde{I}-\widetilde{R}^{'})\underline{m}}X^{-\widetilde{B}\underline{d}-(\widetilde{I}-\widetilde{R}^{'})\underline{n}}\nonumber\\
  &=&q^{\frac{1}{2}\Lambda((\widetilde{I}-\widetilde{R}^{'})\underline{n},
(\widetilde{I}-\widetilde{R}^{'})\underline{m})}q^{\frac{1}{2}\langle
D,A\rangle-\frac{1}{2}\langle B,C\rangle}
X^{-\widetilde{B}\underline{b}-(\widetilde{I}-\widetilde{R}^{'})\underline{m}}X^{-\widetilde{B}\underline{d}-(\widetilde{I}-\widetilde{R}^{'})\underline{n}}.\nonumber
\end{eqnarray}
Thus
\begin{eqnarray}
   && \sum_{E}\varepsilon_{MN}^{E}X_E  \nonumber\\
   &=& q^{-\frac{1}{2}\Lambda((\widetilde{I}-\widetilde{R}^{'})\underline{m},
(\widetilde{I}-\widetilde{R}^{'})\underline{n})}\sum_{A,B,C,D}q^{[M,N]-[A,C]-[B,D]-\langle
A,D\rangle-\frac{1}{2}\langle
B+D,A+C\rangle+[A,C]^{1}+[B,D]^{1}}\cdot\nonumber\\
  &&q^{\frac{1}{2}\langle
D,A\rangle-\frac{1}{2}\langle
B,C\rangle}F^{M}_{AB}F^{N}_{CD}X^{-\widetilde{B}\underline{b}-(\widetilde{I}-\widetilde{R}^{'})\underline{m}}X^{-\widetilde{B}\underline{d}-(\widetilde{I}-\widetilde{R}^{'})\underline{n}}
.\nonumber
\end{eqnarray}
Here we use the following fact
$$\sum_{X}\varepsilon_{AC}^{X}=q^{[A,C]^{1}},\sum_{Y}\varepsilon_{BD}^{Y}=q^{[B,D]^{1}}$$
An easy calculation shows that
$$[M,N]-[A,C]-[B,D]-\langle A,D\rangle+[A,C]^{1}+[B,D]^{1}=[M,N]^{1}+\langle B, C\rangle.$$
Hence
\begin{eqnarray}
   && \sum_{E}\varepsilon_{MN}^{E}X_E  \nonumber\\
   &=& q^{-\frac{1}{2}\Lambda((\widetilde{I}-\widetilde{R}^{'})\underline{m},
(\widetilde{I}-\widetilde{R}^{'})\underline{n})}q^{[M,N]^{1}}\sum_{A,B,C,D}
F^{M}_{AB}q^{-\frac{1}{2}\langle
B,A\rangle}X^{-\widetilde{B}\underline{b}-(\widetilde{I}-\widetilde{R}^{'})\underline{m}}
F^{N}_{CD}q^{-\frac{1}{2}\langle
D,C\rangle}X^{-\widetilde{B}\underline{d}-(\widetilde{I}-\widetilde{R}^{'})\underline{n}}
\nonumber\\
 &=&q^{-\frac{1}{2}\Lambda((\widetilde{I}-\widetilde{R}^{'})\underline{m},
(\widetilde{I}-\widetilde{R}^{'})\underline{n})}q^{[M,N]^{1}}X_{M}X_{N}.\nonumber
\end{eqnarray}
This finishes the proof.\noindent\\[4mm]

\textit{Proof of  Theorem \ref{main}(2):} We calculate
\begin{eqnarray}
   && X_{M}X_{I[-1]}  \nonumber\\
   &=& \sum_{G,H}q^{-\frac{1}{2}\langle H,G\rangle}F^{M}_{GH}X^{-\widetilde{B}\underline{h}-(\widetilde{I}-\widetilde{R}^{'})\underline{m}}
   X^{\mathrm{\underline{dim}}soc I}\nonumber\\
  &=& \sum_{G,H}q^{-\frac{1}{2}\langle H,G\rangle}F^{M}_{GH}q^{\frac{1}{2}\Lambda(-\widetilde{B}\underline{h}-(\widetilde{I}-\widetilde{R}^{'})\underline{m},
\mathrm{\underline{dim}}soc
I)}X^{-\widetilde{B}\underline{h}-(\widetilde{I}-\widetilde{R}^{'})\underline{m}+\mathrm{\underline{dim}}soc
I}
\nonumber\\
 &=&q^{\frac{1}{2}\Lambda(-(\widetilde{I}-\widetilde{R}^{'})\underline{m},
\mathrm{\underline{dim}}soc I)}\sum_{G,H}q^{-\frac{1}{2}\langle
H,G\rangle}q^{\frac{1}{2}\Lambda(-\widetilde{B}\underline{h},\mathrm{\underline{dim}}soc
I)}F^{M}_{GH}X^{-\widetilde{B}\underline{h}-(\widetilde{I}-\widetilde{R}^{'})\underline{m}+\mathrm{\underline{dim}}soc
I}\nonumber\\
 &=&q^{\frac{1}{2}\Lambda((\widetilde{I}-\widetilde{R}^{'})\underline{m},
-\mathrm{\underline{dim}}soc I)}\sum_{G,H}q^{-\frac{1}{2}\langle
H,G\rangle}q^{-\frac{1}{2}[H,I]}F^{M}_{GH}X^{-\widetilde{B}\underline{h}-(\widetilde{I}-\widetilde{R}^{'})\underline{m}+\mathrm{\underline{dim}}soc
I}.\nonumber
\end{eqnarray}
Here we use the fact that
$$\Lambda(-\widetilde{B}\underline{h},\mathrm{\underline{dim}}soc
I)=-\underline{h}^{tr}\widetilde{B}^{tr}\Lambda(\mathrm{\underline{dim}}soc
I)=-[H,I].$$
 Note that  we have the following commutative diagram
$$
\xymatrix{&0\ar[d]&0\ar[d]\\
&Y\ar@{=}[r]\ar[d]&Y\ar[d]\\
 0\ar[r]&B\ar[r]\ar[d]&M\ar[r]\ar[d]&A\ar@{=}[d]\ar[r]&0\\
0\ar[r]&X\ar[r]\ar[d]&G\ar[r]\ar[d]&A\ar[r]&0\\
&0&0}
$$
and short exact
 sequence
$$0\longrightarrow A\longrightarrow I\longrightarrow I'\longrightarrow 0,$$
Thus by \cite{Hubery} we have
$$\sum_{B}F^{B}_{XY}F^{M}_{AB}=\sum_{G}F^{G}_{AX}F^{M}_{GY},\
|\mathrm{Hom}_{\widetilde{\mfk{S}}}(M,I)_{BI'}|=\sum_{A}|\mathrm{Aut}(A)|F^{M}_{AB}F^{I}_{I'A}$$
and
$$\sum_{A,I',X}|\mathrm{Aut}(A)|F^{I}_{I'A}F^{G}_{AX}=\sum_{I',X}|\mathrm{Hom}_{\widetilde{\mfk{S}}}(G,I)_{XI'}|=q^{[
G,I]}=q^{\langle G,I\rangle}.$$ By \cite[Lemma 1]{Hubery}, we have
$(\widetilde{I}-\widetilde{R}^{'})\underline{i}=\mathrm{\underline{dim}}
soc I$. Now we can  calculate the  term
\begin{eqnarray}
   &&  \sum_{B,I'}|\mathrm{Hom}_{\widetilde{\mfk{S}}}(M,I)_{BI'}|X_{B\oplus
I'[-1]}  \nonumber\\
   &=& \sum_{A,B,I',X,Y}|\mathrm{Aut}(A)|F^{M}_{AB}F^{I}_{I'A}q^{-\frac{1}{2}\langle
Y,X-I'\rangle}F^{B}_{XY}X^{-\widetilde{B}\underline{y}-(\widetilde{I}-\widetilde{R}^{'})\underline{b}+\mathrm{\underline{dim}}
soc I'}\nonumber\\
  &=& \sum_{A,G,I',X,Y}q^{-\frac{1}{2}\langle
Y,X-I'\rangle}|\mathrm{Aut}(A)|F^{I}_{I'A}F^{G}_{AX}F^{M}_{GY}X^{-\widetilde{B}\underline{y}-(\widetilde{I}-\widetilde{R}^{'})\underline{b}+\mathrm{\underline{dim}}
soc I'}.\nonumber
\end{eqnarray}
Note that  we have  the  following facts
$$\underline{i'}+\underline{a}=\underline{i},\
\underline{x}+\underline{a}=\underline{g}\Longrightarrow
\underline{x}-\underline{i'}=\underline{g}-\underline{i},$$ and
\begin{eqnarray}
   &&  -\widetilde{B}\underline{y}-(\widetilde{I}-\widetilde{R}^{'})\underline{b}+\mathrm{\underline{dim}}
soc I' \nonumber\\
   &=&-\widetilde{B}\underline{h}-(\widetilde{I}-\widetilde{R}^{'})(\underline{m}-\underline{i}+\underline{i'})+\mathrm{\underline{dim}}
soc I'\nonumber\\
  &=& -\widetilde{B}\underline{h}-(\widetilde{I}-\widetilde{R}^{'})\underline{m}
  +(\widetilde{I}-\widetilde{R}^{'})(\underline{i}-\underline{i'})+\mathrm{\underline{dim}}
soc I'
\nonumber\\
 &=& -\widetilde{B}\underline{h}-(\widetilde{I}-\widetilde{R}^{'})\underline{m}
  +(\widetilde{I}-\widetilde{R}^{'})\underline{i}
\nonumber\\
 &=&-\widetilde{B}\underline{h}-(\widetilde{I}-\widetilde{R}^{'})\underline{m}
  +\mathrm{\underline{dim}}
soc I.\nonumber
\end{eqnarray}
Hence
\begin{eqnarray}
   &&  \sum_{B,I'}|\mathrm{Hom}_{\widetilde{\mfk{S}}}(M,I)_{BI'}|X_{B\oplus
I'[-1]}  \nonumber\\
 &=&\sum_{G,H}q^{\langle
G,I\rangle}q^{-\frac{1}{2}\langle
H,G-I\rangle}F^{M}_{GH}X^{-\widetilde{B}\underline{h}-(\widetilde{I}-\widetilde{R}^{'})\underline{m}+\mathrm{\underline{dim}}
soc I}
\nonumber\\
 &=&\sum_{G,H}q^{\langle
M,I\rangle}q^{-\frac{1}{2}\langle H,I\rangle}q^{-\frac{1}{2}\langle
H,G\rangle}F^{M}_{GH}X^{-\widetilde{B}\underline{h}-(\widetilde{I}-\widetilde{R}^{'})\underline{m}+\mathrm{\underline{dim}}
soc I}\nonumber\\
 &=&q^{[
M,I]}\sum_{G,H}q^{-\frac{1}{2}[H,I]}q^{-\frac{1}{2}\langle
H,G\rangle}F^{M}_{GH}X^{-\widetilde{B}\underline{h}-(\widetilde{I}-\widetilde{R}^{'})\underline{m}+\mathrm{\underline{dim}}
soc I}.\nonumber
\end{eqnarray}
This finishes the proof.
\noindent\\[4mm]

  To prove Corollary \ref{cor}, we recall the following lemma which can be found in  \cite{CK2005}\cite{D}.
\begin{lem}\label{easy}
        Let $$M \longrightarrow E \longrightarrow N \xrightarrow{\epsilon} M[1]$$
        be a non-split triangle in $\mathcal C_{\widetilde{Q}}.$ Then
        $$\mathrm{dim}_{k}\mathrm{Ext}^{1}_{\mathcal C_{\widetilde{Q}}}(E,E) < \mathrm{dim}_{k}\mathrm{Ext}^{1}_{\mathcal C_{\widetilde{Q}}}(M \oplus N, M \oplus N).$$
\end{lem}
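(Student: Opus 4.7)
The plan is to combine the long exact sequences obtained by applying $\Hom_{\mathcal C_{\widetilde{Q}}}(-,T)$ and $\Hom_{\mathcal C_{\widetilde{Q}}}(E,-)$ to the given triangle, where $T:=M\oplus N$. The target strict inequality $\dim_k\Ext^{1}_{\mathcal C_{\widetilde{Q}}}(E,E)<\dim_k\Ext^{1}_{\mathcal C_{\widetilde{Q}}}(T,T)$ will be obtained by factoring through $\dim_k\Ext^{1}_{\mathcal C_{\widetilde{Q}}}(E,T)$, with the non-splitness of the triangle furnishing the strict drop.

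First I would apply $\Hom_{\mathcal C_{\widetilde{Q}}}(-,T)$ to $M\to E\to N\xrightarrow{\epsilon}M[1]$ to obtain the six-term exact sequence
$$\Hom(M,T)\xrightarrow{\epsilon^{*}}\Ext^{1}(N,T)\to\Ext^{1}(E,T)\to\Ext^{1}(M,T)\xrightarrow{\epsilon^{*}}\Ext^{2}(N,T).$$
Reading off dimensions across this sequence yields $\dim_k\Ext^{1}(E,T)=\dim_k\Ext^{1}(T,T)-r_{1}-r_{2}$, where $r_{1}$ and $r_{2}$ are the ranks of the two connecting maps $\epsilon^{*}$; in particular $\dim_k\Ext^{1}(E,T)\leq\dim_k\Ext^{1}(T,T)$.

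The key step is to upgrade this to a strict inequality. Since the triangle is non-split, $\epsilon\neq 0$, and the connecting map $\epsilon^{*}\colon\Hom(M,T)\to\Ext^{1}(N,T)$ sends $\mathrm{id}_{M}\in\Hom(M,M)\subseteq\Hom(M,T)$ to $\epsilon\in\Ext^{1}(N,M)\subseteq\Ext^{1}(N,T)$, which is nonzero. Hence $r_{1}\geq 1$, and therefore $\dim_k\Ext^{1}(E,T)<\dim_k\Ext^{1}(T,T)$.

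Finally I would apply $\Hom_{\mathcal C_{\widetilde{Q}}}(E,-)$ to the same triangle and read off the exact segment $\Ext^{1}(E,M)\to\Ext^{1}(E,E)\to\Ext^{1}(E,N)$, which gives $\dim_k\Ext^{1}(E,E)\leq\dim_k\Ext^{1}(E,M)+\dim_k\Ext^{1}(E,N)=\dim_k\Ext^{1}(E,T)$. Chaining the two estimates yields the desired conclusion. The main obstacle is making precise the identification of the connecting map $\epsilon^{*}$ so that the image of $\mathrm{id}_{M}$ is visibly the class of $\epsilon$ itself; once that identification is in hand, the rest is a standard dimension-count along the two long exact sequences in the triangulated category $\mathcal C_{\widetilde{Q}}$.
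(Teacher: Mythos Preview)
Your argument is correct. The paper itself does not supply a proof of this lemma; it merely records it as a fact ``which can be found in \cite{CK2005}\cite{D}''. Your two-step approach --- first bounding $\dim_k\Ext^1(E,T)$ strictly by $\dim_k\Ext^1(T,T)$ via the long exact sequence for $\Hom(-,T)$ and the observation that $\epsilon^*(\mathrm{id}_M)=\epsilon\neq 0$, then bounding $\dim_k\Ext^1(E,E)$ by $\dim_k\Ext^1(E,T)$ via the long exact sequence for $\Hom(E,-)$ --- is precisely the standard proof given in Caldero--Keller. The identification of the connecting map you flag as the ``main obstacle'' is immediate once one writes $\epsilon^*\colon\Hom(M[1],T[1])\to\Hom(N,T[1])$ as precomposition with $\epsilon$, so there is no genuine difficulty there. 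One small remark: your sequence mentions $\Ext^2(N,T)$, which in the cluster category simply means $\Hom_{\mathcal C_{\widetilde{Q}}}(N,T[2])$; you only use that the long exact sequence is exact at the three $\Ext^1$ terms, so this causes no trouble.
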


\textit{Proof of  Corollary \ref{cor}:} Firstly, we prove that for
any indecomposable object $M\in\mathcal C_{\widetilde{Q}}$, $X_M$ is
in the quantum cluster algebra $\mathcal{EH}_{|k|}(Q)$.

When $Q$ is a valued quiver of finite type, it follows that $X_M$ is
a cluster variable for any indecomposable object $M\in\mathcal
C_{\widetilde{Q}}$ by \cite{rupel}.

When $Q$ is a valued quiver of rank $2$. Denoted by
$$\Phi_{i}: \mathcal{A}_{|k|}(Q)\rightarrow
\mathcal{A}_{|k|}(Q')$$ the canonical isomorphism of quantum cluster
algebras associated to sink or source $1\leq i\leq 2$. Let
$\Sigma_i:\ \mathrm{mod}Q \longrightarrow \ \mathrm{mod}Q'$ be the
standard BGP-reflection functor. It follows from \cite[Theorem
2.4]{rupel} that $\Phi_{i}(X_M^{Q})=X_{\Sigma_iM}^{Q'}$ for any
regular module $M$ of $Q$. Note also  the fact $Q$ is an acyclic
valued quiver of rank $2$, we have that $X_M$ is in the upper
quantum cluster algebra associated to $Q$ which coincides with the
quantum cluster algebra $\mathcal{EH}_{|k|}(Q)$ by the acyclicity of
$Q$ \cite{berzel}. When  $M$ is an indecomposable preprojective or
preinjective module, it follows from \cite{rupel} $X_M$ is a cluster
variable, hence in the quantum cluster algebra
$\mathcal{EH}_{|k|}(Q)$.

Now we  need to prove that for any cluster character
$X_{L}\in\mathcal{AH}_{|k|}(Q)$, then
$X_{L}\in\mathcal{EH}_{|k|}(Q)$. Let $L\cong
\bigoplus_{i=1}^{l}L_{i}^{\oplus n_{i}}, n_{i}\in \mathbb{N}$ where
$L_{i}\ (1\leq i\leq l)$ are indecomposable objects in $\mathcal
C_{\widetilde{Q}}$. By Theorem \ref{main} and Lemma \ref{easy}, we
have that
$$X^{n_{1}}_{L_{1}}X^{n_{2}}_{L_{2}}\cdots X^{n_{l}}_{L_{l}}=q^{\frac{1}{2}n_{L}}X_{L}+
\sum_{\dim_{k}\mathrm{Ext}^{1}_{\mathcal
C_{\widetilde{Q}}}(E,E)<\dim_{k}\mathrm{Ext}^{1}_{\mathcal
C_{\widetilde{Q}}}(L,L)}f_{n_{E}}(q^{\pm\frac{1}{2}})X_E$$ where
$n_{L}\in \mathbb{Z}$ and $f_{n_{E}}(q^{\pm\frac{1}{2}})\in
\mathbb{Z}[q^{\pm\frac{1}{2}}].$ Note that the left side of the
equation above is in  $\mathcal{EH}_{|k|}(Q)$, thus by induction, it
follows  that $X_{L}\in \mathcal{EH}_{|k|}(Q)$ which finishes the
proof.
\noindent\\[4mm]

\noindent{\bbb 3\quad Bases in the quantum cluster algebras of rank $2$}\\[0.1cm]
In this section, we consider   a valued quiver (see \cite{rupel} for
details) associated to a given compatible pair $(\Lambda,
B)$ where  $\Lambda=\left(\begin{array}{cc} 0 & 1\\
-1 &
0\end{array}\right)$ and $B=\left(\begin{array}{cc} 0 & b\\
-c & 0\end{array}\right)$ for any $b,c\in \mathbb{Z}_{> 0}$. Let
$\mathcal{T}=\ZZ[q^{\pm 1/2}]\langle X_1^{\pm1}, X_2^{\pm1}:
X_1X_2=qX_2X_1\rangle$ and ${\mathcal F}$ be the skew field of
fractions of $\mathcal{T}$ and thus the quantum cluster algebra of
the valued quiver of rank $2$ (denoted by $\mathcal{A}_q(b,c)$ in
the sequel) is the $\ZZ[q^{\pm 1/2}]$-subalgebra of ${\mathcal F}$
generated by the cluster variables $X_k$, $k\in\ZZ$, defined
recursively by
\[X_{m-1}X_{m+1}=\begin{cases}
q^{\frac{b}{2}}X^{b}_{m}+1& \text{if $m$ is odd;}\\
q^{\frac{c}{2}}X^{c}_{m}+1 & \text{if $m$ is even.}
\end{cases}
\]

\begin{definition}\label{defp}
For  any $(r_{1},r_{2})$ and $(s_{1},s_{2})\in \mathbb{Z}^{2}$, we
write $(r_{1},r_{2})\preceq (s_{1},s_{2})$ if $r_{i}\leq s_{i}$ for
$1\leq i\leq 2$. Moreover, if there exists some $i$ such that
$r_{i}< s_{i}$, then we write $(r_{1},r_{2})\prec (s_{1},s_{2}).$
\end{definition}
For any $\underline{m}\in \mathbb{Z}^{2},$ define
$\underline{m}^{+}=(m^+_1,m^+_2)$ such that $m^+_i=m_i$ if $m_i>0$
and $m_i^+=0$ if $m_i\leq 0$ for any $1\leq i \leq 2.$ Dually, we
set $\underline{m}^-=\underline{m}^+-\underline{m}.$  Denote by
$\underline{dim}I[-1]=-\underline{dim}soc I$ for any injective
module $I$. For any $\underline{d}\in \mathbb{Z}^{2},$ we make the
following assignment by
$$X_{\underline{d}}:=X_{M}\ \ \ \text{for any}\ M\in\mathcal C_{Q}\ \text{with}\ \underline{dim}M=\underline{d}.$$
Note that this assignment is not unique.

\begin{thm}
The set
$\mathcal{B}=\{X_{\underline{d}}|\underline{d}\in\mathbb{Z}^{2}\}$
is a $\mathbb{Z}[q^{\pm \frac{1}{2}}]-$basis of the quantum cluster
algebra $\mathcal{A}_q(b,c)$.
\end{thm}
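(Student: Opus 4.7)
The plan is to verify that $\mathcal{B}$ both spans $\mathcal{A}_q(b,c)$ over $\mathbb{Z}[q^{\pm 1/2}]$ and is linearly independent. Spanning follows quickly from Corollary \ref{cor} together with the multiplication formula of Theorem \ref{main}, while linear independence will be established by a leading-term analysis in the initial quantum torus, compared against the standard monomial basis of Berenstein--Zelevinsky \cite{berzel}.

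For spanning, Corollary \ref{cor} identifies $\mathcal{A}_q(b,c)$ with $\mathcal{AH}_{|k|}(Q)$, the $\mathbb{ZP}$-algebra generated by all cluster characters $X_L$. Decomposing $L=\bigoplus_{i=1}^{l}L_i^{\oplus n_i}$ into indecomposables and applying the same Lemma \ref{easy}--Theorem \ref{main} induction used in the proof of Corollary \ref{cor}, we obtain
\[
X_{L_1}^{n_1}X_{L_2}^{n_2}\cdots X_{L_l}^{n_l}=q^{\frac{1}{2}n_L}X_L+\sum_{\dim_k\mathrm{Ext}^1_{\mathcal{C}_{\widetilde{Q}}}(E,E)<\dim_k\mathrm{Ext}^1_{\mathcal{C}_{\widetilde{Q}}}(L,L)}f_{n_E}(q^{\pm 1/2})X_E,
\]
so that every $X_L$ is a $\mathbb{Z}[q^{\pm 1/2}]$-combination of products of $X_M$ for indecomposable $M\in\mathcal{C}_{\widetilde{Q}}$, each such $X_M$ being $X_{\underline{\dim}M}\in\mathcal{B}$ by definition. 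The same induction, applied to a product $X_{\underline{d}}X_{\underline{e}}=X_M X_N$ and using Theorem \ref{main}(1)--(2) on the right-hand side, re-expresses the product back in the $\mathbb{Z}[q^{\pm 1/2}]$-span of $\mathcal{B}$, so $\mathcal{B}$ is stable under multiplication and in particular spans the algebra.

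For linear independence, I would exploit the explicit form of the quantum Caldero--Chapoton formula. Writing $X_{\underline{d}}=X_M$ with $\underline{\dim}M=\underline{d}$, the expansion
\[
X_M=\sum_{\underline{e}}|\mathrm{Gr}_{\underline{e}}M|q^{-\frac{1}{2}\langle\underline{e},\underline{m}-\underline{e}\rangle}X^{-\widetilde{B}\underline{e}-(\widetilde{I}-\widetilde{R}')\underline{m}}
\]
is a Laurent polynomial in $X_1,X_2$ whose $\underline{e}=0$ contribution is the monomial $X^{-(\widetilde{I}-\widetilde{R}')\underline{d}}$, and analogously for the injective-shifted part (with correction $\underline{\dim}\,\mathrm{soc}\,I$). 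I would check that the resulting map $\underline{d}\mapsto g(\underline{d})\in\mathbb{Z}^2$ taking $X_{\underline{d}}$ to the exponent vector of its $\underline{e}=0$ term is \emph{injective}, so that with respect to any term order making this monomial the leading one, distinct elements of $\mathcal{B}$ have distinct leading terms in the quantum torus $\mathcal{T}$. Since $\{X_1^{a}X_2^{b}\}$ is a $\mathbb{Z}[q^{\pm 1/2}]$-basis of $\mathcal{T}$, this triangularity yields the $\mathbb{Z}[q^{\pm 1/2}]$-linear independence of $\mathcal{B}$.

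The hardest step is the choice issue underlying the definition $X_{\underline{d}}=X_M$ for an arbitrary $M$ with $\underline{\dim}M=\underline{d}$: when $bc\geq 4$, non-isomorphic indecomposable regular modules can share a dimension vector, so the assignment is genuinely not canonical. To make the basis statement meaningful, I need to show that any two choices of representatives yield sets with the same $\mathbb{Z}[q^{\pm 1/2}]$-span, equivalently that their difference lies in the span of $\mathcal{B}\setminus\{X_{\underline{d}}\}$. This will require analyzing the Auslander--Reiten components of $\mathcal{C}_Q$ (homogeneous tubes in the affine case, regular components in the wild case) and using Theorem \ref{main} to compare $X_M$ and $X_{M'}$ when $\underline{\dim}M=\underline{\dim}M'$. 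The resulting compatibility, together with the leading-term argument above and a direct comparison with the Berenstein--Zelevinsky standard monomials of \cite{berzel}, completes the proof.
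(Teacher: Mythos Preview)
Your outline is broadly in the right direction, but you take a considerably longer route than the paper and leave precisely the step you flag as ``hardest'' unresolved.

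The paper's proof is short because it makes the Berenstein--Zelevinsky standard monomial basis the \emph{central} tool rather than a final comparison. After noting (via Corollary~\ref{cor}) that each $X_{\underline{d}}$ lies in $\mathcal{A}_q(b,c)$, the paper observes that the minimal monomial (for the partial order $\preceq$ of Definition~\ref{defp}) in the Laurent expansion of $X_{\underline{d}}$ has exponent determined by $\underline{d}$ alone, with coefficient a power of $q^{\pm 1/2}$. This yields directly a unitriangular expansion
\[
X_{\underline{d}}=b_{\underline{d}}\,X_1^{d_1^-}X_2^{d_2^-}X_{S_1}^{d_1^+}X_{S_2}^{d_2^+}+\sum_{\underline{l}\prec\underline{d}}b_{\underline{l}}\,X_1^{l_1^-}X_2^{l_2^-}X_{S_1}^{l_1^+}X_{S_2}^{l_2^+}
\]
in terms of the standard monomials, with $b_{\underline{d}}$ a unit in $\mathbb{Z}[q^{\pm 1/2}]$. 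Since the standard monomials are already known from \cite{berzel} to be a $\mathbb{Z}[q^{\pm 1/2}]$-basis, this single triangularity gives spanning and linear independence at once; no separate multiplication-closure argument is needed.

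The point you are missing is that this also dissolves your ``hardest step'' for free. The extremal contribution to the Grassmannian sum (your $\underline{e}=0$ term, or the paper's minimal term) depends only on $\underline{d}$, not on the particular $M$ chosen to represent it, because $|\mathrm{Gr}_0 M|=|\mathrm{Gr}_{\underline{m}}M|=1$ regardless of $M$. Hence \emph{any} assignment $\underline{d}\mapsto X_{\underline{d}}$ produces the same unitriangular system against the standard monomials, so every choice gives a basis; there is no need to compare $X_M$ with $X_{M'}$ via Auslander--Reiten theory. Your spanning argument, by contrast, genuinely needs that comparison: Theorem~\ref{main} expresses $X_M X_N$ through characters $X_E$ of middle terms $E$ that are typically \emph{not} the chosen representatives in $\mathcal{B}$, so without the unproven compatibility you never close the loop. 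Replacing your spanning argument by the triangularity against standard monomials removes the gap and matches the paper's proof.
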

\begin{proof}
 Note that for any $\underline{d}\in \mathbb{Z}^2$,  $X_{\underline{d}}\in \mathcal{A}_q(b,c)$ by Corollary \ref{cor}.
According to the definition of the quantum Caldero-Chapoton map and
the partial order in Definition \ref{defp}, we obtain  a minimal
term $a_{\underline{d}}X^{\underline{d}}$ in the laurent expansion
in $X_{\underline{d}}$, for some nonzero
$a_{\underline{d}}\in\mathbb{Z}[q^{\pm \frac{1}{2}}]$. Then by  the
standard monomials in \cite{berzel}, we have
$$X_{\underline{d}}=b_{\underline{d}}X^{d^-_1}_{1}X^{d^-_2}_{2}X^{d^+_1}_{S_1}X^{d^+_2}_{S_2}+\sum_{\underline{d}\succ
\underline{l}}b_{\underline{l}}X^{l^-_1}_{1}X^{l^-_2}_{2}X^{l^+_1}_{S_1}X^{l^+_2}_{S_2}$$
where  $b_{\underline{d}},b_{\underline{l}}\in \mathbb{Z}[q^{\pm
\frac{1}{2}}]$. It is easy to see that $b_{\underline{d}}$ must be
some nonzero monomial in $q^{\pm\frac{1}{2}}$. Thus we obtain that
$\mathcal{B}$ is a $\mathbb{Z}[q^{\pm \frac{1}{2}}]$-basis of
$\mathcal{A}_q(b,c)$.
\end{proof}

 \noindent{\bbb{References}}
\begin{enumerate}
{\footnotesize \bibitem{BFZ}\label{BFZ} Berenstein A, Fomin S,
Zelevinsky A. Cluster algebras III: Upper bounds and double Bruhat
cells. Duke Math. J., 2005, 126: 1--52\\[-6.5mm]

\bibitem{BMRRT}\label{BMRRT} Buan A,  Marsh R, Reineke M, Reiten I, Todorov G. Tilting theory and cluster
combinatorics. Adv. Math., 2006, 204: 572--618\\[-6.5mm]

\bibitem{berzel}\label{berzel} Berenstein A, Zelevinsky A. Quantum cluster algebras. Adv.
Math., 2005, 195: 405--455\\[-6.5mm]

\bibitem{caldchap}\label{caldchap} Caldero P, Chapoton F.  Cluster algebras as Hall algebras of
quiver representations.  Comm. Math. Helv.,  2006, 81:
595--616\\[-6.5mm]

\bibitem{CK2005}\label{CK2005} Caldero P, Keller B. From triangulated categories to cluster
algebras. Invent. Math., 2008, 172(1): 169--211\\[-6.5mm]

\bibitem{D}\label{D} Ding M. On quantum cluster algebras of
finite type. Front. Math. China 2011, 6(2): 231¨C240\\[-6.5mm]

\bibitem{DX0}\label{DX0} Ding M,  Xu F. Bases of the quantum cluster algebra of
the Kronecker quiver. arXiv:1004.
2349v4 [math.RT]\\[-6.5mm]

\bibitem{DX}\label{DX} Ding M,  Xu F. The multiplication theorem and bases in finite and
affine quantum cluster algebras.  arXiv:1006.3928v3 [math.RT]\\[-6.5mm]

\bibitem{DXX}\label{DXX} Ding M, Xiao J, Xu F.
Integral bases of cluster algebras and representations of tame
quivers. arXiv:0901.1937 [math.RT]\\[-6.5mm]

\bibitem{ca1}\label{ca1} Fomin S,  Zelevinsky A.  Cluster algebras. I. Foundations. J.
Amer. Math. Soc.,  2002,  15(2): 497--529\\[-6.5mm]

\bibitem{ca2}\label{ca2} Fomin S,  Zelevinsky A. Cluster algebras. II. Finite type
classification.  Invent. Math.,  2003,  154(1): 63--121\\[-6.5mm]

\bibitem{g1}\label{g1} Geiss C, Leclerc B, Schr$\ddot{o}$er J. Kac-Moody groups and
cluster algebras.  arXiv:1001.3545v2 [math.RT]\\[-6.5mm]

\bibitem{g2}\label{g2} Geiss C, Leclerc B, Schr$\ddot{o}$er J. Generic bases for cluster
algebras and the Chamber Ansatz.  arXiv:1004.2781v2 [math.RT]\\[-6.5mm]

\bibitem{j}\label{j}  Grabowski J, Launois S. Quantum cluster algebra structures on
quantum Grassmannians and their quantum Schubert cells: the
finite-type cases. Int Math
Res Notices, 2010, doi: 10.1093/imrn/rnq153\\[-6.5mm]

\bibitem{Green}\label{Green}
Green, J. A. \emph{Hall algebras, hereditary algebras and quantum
groups}, Inv. Math. \textbf{120} (1995), 361--377.

\bibitem{Hubery}\label{Hubery} Hubery A. Acyclic cluster algebras via Ringel-Hall algebras.
preprint, 2005, available at the author's homepage\\[-6.5mm]

\bibitem{lampe}\label{lam} Lampe P. A quantum cluster algebra of Kronecker type and the dual
canonical basis. Int Math Res Notices, 2010, doi: 10.1093/imrn/rnq162\\[-6.5mm]

\bibitem{fanqin}\label{fanqin} Qin F.  Quantum cluster variables via Serre polynomials.
arXiv:1004.4171v2 [math.QA]\\[-6.5mm]

\bibitem{rupel}\label{rupel} Rupel D. On a quantum analogue of the Caldero-Chapoton Formula. Int
Math Res Notices, 2010, doi:10.1093/imrn/rnq192
}
\end{enumerate}
\end{document}